\documentclass[12pt,letterpaper, reqno]{amsart}

\textheight=8.5  true in
\textwidth=6.5 true in
\hoffset=-0.6true in
\voffset=-0.4true in

\usepackage{amssymb,latexsym, amsmath, amsxtra}
\usepackage[dvips]{graphics}

\theoremstyle{plain}
\newtheorem{theorem}{Theorem}[section]

\newtheorem{question}[theorem]{Question}
\newtheorem{lemma}[theorem]{Lemma}

\theoremstyle{definition}
\newtheorem{definition}[theorem]{Definition}
\theoremstyle{remark}

\newtheorem*{remark}{Remark}

\numberwithin{equation}{section}
\numberwithin{theorem}{section}
\numberwithin{table}{section}
\numberwithin{figure}{section}

\newcommand{\R}{\mathbb R}

\newcommand{\Q}{\mathbb Q}

\def\({\left(}
\def\){\right)}

\begin{document}
\title{The highest lowest zero of general L-functions}
\author[Bober, et al]{Jonathan Bober, J.~Brian~Conrey, David~W.~Farmer, Akio~Fujii, Sally~Koutsoliotas, Stefan~Lemurell, Michael~Rubinstein, and  Hiroyuki~Yoshida}


\begin{abstract}
Stephen D. Miller showed that, assuming the generalized Riemann Hypothesis,
every entire $L$-function of real archimedian type has a zero in the
interval $\frac12+i t$ with $-t_0 < t < t_0$, where $t_0\approx 14.13$ 
corresponds to the first zero of the Riemann zeta function.
We give a numerical example of a self-dual degree-4 $L$-function
whose first positive imaginary zero is at $t_1\approx 14.496$.  In particular,
Miller's result does not hold for general $L$-functions.  
We show that all $L$-functions 
satisfying some additional (conjecturally true) conditions have
a zero in the interval $(-t_2,t_2)$ with $t_2\approx 22.661$.
\end{abstract}

\maketitle

\section{Introduction} 

The Generalized Riemann Hypothesis asserts that the nontrivial zeros
of an $L$-function lie on the critical line $\Re(s)=\frac12$.
Thus, the zeros can be listed as $\frac12+i \gamma_n$ where
$\cdots\le \gamma_{-2}\le \gamma_{-1}<0\le \gamma_1\le \gamma_2\le \cdots$,
with zeros repeated according to their multiplicity.  We refer to
$\frac12+i\gamma_1 $, or to $\gamma_1$ when no confusion will result,
as the ``first'' zero of the $L$-function.

The first zero of the Riemann zeta function is approximately
$\gamma_1\approx 14.13$.  Stephen D. Miller\cite{M} proved that a large
class of $L$-functions have a smaller first zero, so among that
class the zeta function has the highest lowest zero.  Miller was motivated
by a question of Odlyzko\cite{O}, who showed that the Dedekind
zeta function of any number field has a zero whose imaginary part is
less than~14.

In this note we quote results from \cite{FKL} and \cite{B}, respectively, 
which establish the following:

\begin{itemize}
\item{} There exists an $L$-function whose lowest zero is higher than the
lowest zero of the Riemann zeta function.

\item{} Assuming certain generally believed hypotheses, there is a universal
upper bound on the gap between consecutive critical zeros of any $L$-function.
\end{itemize}

In the next section we briefly introduce the $L$-functions 
we consider in this paper and describe a general
result bounding the gaps between consecutive zeros.  Then in Section~\ref{sec:example}
we give a numerical example from \cite{FKL} of  an $L$-function with $\gamma_1\approx 14.496$ and we
explain why it is not actually that surprising that there are $L$-functions
whose first zero is higher than that of the Riemann zeta function.
Then in Section~\ref{sec:explicit} we use the explicit formula
to give upper bounds for $\gamma_1$ for the $L$-functions we consider.
In Section~\ref{sec:millermethod} we show that, while it is not
surprising that there are $L$-functions which have large gaps between their
zeros, the existence of the example in Section~\ref{sec:example} is
surprising.

\section{$L$-functions}\label{sec:Lfunctions}

By an \emph{$L$-function} we mean the $L$-function attached to
an irreducible unitary cuspidal automorphic representation of $GL_n$ over $\Q$,
and furthermore we assume the Ramanujan-Petersson conjecture and the
Generalized Riemann Hypothesis.  This means that we can write the
$L$-function as
a Dirichlet series
\begin{equation}\label{eqn:ds}
L(s) = \sum_{n=1}^\infty \frac{a_n}{n^s}
\end{equation}
where $a_n\ll n^\delta$ for any $\delta>0$, which has an Euler
product
\begin{equation}\label{eqn:ep}
L(s)=\prod_p L_p(p^{-s})^{-1}
\end{equation}
and satisfies a functional equation of the form
\begin{equation}\label{eqn:fe}
    \Lambda(s) =  Q^s \prod_{j=1}^d \Gamma_\R\left(s + \mu_j\right) L(s) 
= \varepsilon \overline{\Lambda(1 - \bar{s})}.
\end{equation}
Here $|\varepsilon|=1$ and we assume that $\Re(\mu_j)\ge 0$ and~$Q\ge 1$.
The normalized $\Gamma$-function is defined as
\begin{equation}
\Gamma_\R(s) = \pi^{-s/2}\Gamma(s/2),
\end{equation}
where $\Gamma(s)$ is the usual Euler Gamma function.
The number $d$ is called the \emph{degree} of the $L$-function, which
for all but finitely many $p$ is also the degree of the polynomial~$L_p$.

We use Weil's explicit formula, given in Lemma~\ref{lem:weil},
 to prove the following theorem.

\begin{theorem}\label{thm:selberg-class-bound}
If $L(s)$ is entire and satisfies the Generalized Riemann Hypothesis,
then $L(1/2 + it)$ has a zero
in every interval of the form $t \in [t_0, t_0 + 45.3236]$.
\end{theorem}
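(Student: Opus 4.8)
The plan is to argue by contradiction via Weil's explicit formula (Lemma~\ref{lem:weil}), applied with a test function concentrated at the midpoint of a hypothetical zero‑free interval. Suppose $L$ is entire, satisfies the Generalized Riemann Hypothesis, and that $L(1/2+it)$ has no zero with $t\in(c-\Delta,c+\Delta)$, where $2\Delta=45.3236$ and $c\in\mathbb{R}$. Fix once and for all an even, nonnegative, band‑limited function $K$ — for instance a Fej\'er‑type kernel whose Fourier transform $\widehat K$ is supported in $[-\delta,\delta]$, with $\delta$ a parameter to be optimized near $\log 2$ — and let $h(t)=K(t-c)$. Since $K\ge 0$ we have $\sum_\gamma h(\gamma)\ge 0$; and since the hypothesis forces every zero to satisfy $|\gamma-c|\ge\Delta$ while $K$ decays like $|t|^{-2}$, this zero sum is in fact small. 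Combining the decay of $K$ with the standard upper bound for the number of zeros of $L$ in a unit interval at height $T$ (of the shape: $d$ times a logarithm in $|T|$, plus a bounded multiple of $\log Q$, plus $O(d)$, which follows from the functional equation and the argument principle) gives
\[
0\ \le\ \sum_\gamma h(\gamma)\ \le\ d\,\Psi(\Delta,c)\ +\ \frac{\log Q}{\pi}\,\Theta(\Delta,c),
\]
with $\Psi,\Theta$ explicit and $\Psi(\Delta,c)\to 0$ as $\Delta\to\infty$ for fixed $c$.

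Next I would bound the right‑hand side of the explicit formula from below. The conductor term equals $\frac{\log Q}{\pi}\widehat K(0)$ with $\widehat K(0)=\int K>0$; since $Q\ge 1$ and $\int_{|x|\ge\Delta}K$ is tiny for $\Delta$ in the relevant range, after pairing with the $\log Q$‑piece of the zero count this contribution may be discarded with a favorable sign. The archimedean term $\frac{1}{2\pi}\sum_{j=1}^{d}\int h(t)\,\mathrm{Re}\,\frac{\Gamma_\R'}{\Gamma_\R}\!\left(\tfrac12+\mu_j+it\right)dt$ is bounded below, using $\Re\mu_j\ge 0$ and the monotonicity/growth of $\mathrm{Re}\,\frac{\Gamma_\R'}{\Gamma_\R}(\tfrac12+it)=-\tfrac12\log\pi+\tfrac12\,\mathrm{Re}\,\psi(\tfrac14+\tfrac{it}{2})$, by $d$ times an explicit functional $\alpha(c)$ of $K$ and $c$ alone. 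This is exactly the mechanism behind Miller's theorem: although $\mathrm{Re}\,\frac{\Gamma_\R'}{\Gamma_\R}(\tfrac12+it)$ is negative for small $|t|$, band‑limiting forces $K$ to spread over a scale $\asymp 1/\delta$, far enough that the logarithmic growth of the $\Gamma$‑factor term takes over and $\alpha(c)>0$. Finally the prime sum $-\frac{1}{\pi}\sum_{n\ge 2}\frac{\mathrm{Re}\!\left(\Lambda_L(n)\,\widehat K(\log n)\,n^{-ic}\right)}{\sqrt n}$, where $\Lambda_L$ is given by $-L'/L(s)=\sum_n\Lambda_L(n)n^{-s}$, is bounded by $\frac{d}{\pi}E(K)$ in absolute value via Ramanujan--Petersson, $|\Lambda_L(n)|\le d\,\Lambda(n)$, where $E(K)=\sum_{n\ge 2}\frac{\Lambda(n)}{\sqrt n}\,|\widehat K(\log n)|$; for $\delta\le\log 2$ this sum is empty and $E(K)=0$.

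Assembling the two sides and dividing by $d$ (legitimate: the conductor term is dropped and every remaining term is $O(d)$) yields
\[
\alpha(c)\ \le\ \Psi(\Delta,c)\ +\ \frac{1}{\pi}\,E(K),
\]
so a contradiction follows as soon as $K$ and $\delta$ are chosen so that $\alpha(c)>\Psi(\Delta,c)+\frac1\pi E(K)$ for all $c\in\mathbb{R}$ and all $\Delta\ge 22.6618$. Since $\alpha(c)$ grows like $\tfrac12\log|c|$ while $\Psi(\Delta,c)$ stays bounded as $|c|\to\infty$, the inequality is automatic for large $|c|$, so it need only be checked for $c$ in a compact interval about the origin; and since $\Psi$ decreases in $\Delta$ and the hypothesis becomes only more favorable, it suffices to treat $\Delta=22.6618$. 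This final verification — the joint optimization over the profile of $K$ and the band‑width $\delta$, together with rigorous worst‑case control of $\alpha(c)$ over $\Re\mu_j\ge 0$ and of the zero‑count tail $\Psi(\Delta,c)$ — is the main obstacle: it is the only genuinely numerical (as opposed to structural) part of the argument, and it is precisely what pins down the constant $45.3236=2\Delta$.
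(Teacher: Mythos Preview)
Your approach differs from the paper's in a key structural way. The paper applies the explicit formula not with a nonnegative kernel but with the Selberg \emph{minorant} $S_-$ for the interval $[\alpha,\beta]=[-2.5/\delta,\,2.5/\delta]$, $\delta=(\log 2)/(2\pi)$: a function with $S_-(x)\le\chi_{[\alpha,\beta]}(x)$ for all real $x$ (hence $S_-(x)\le 0$ for $x\notin[\alpha,\beta]$) and with $\widehat{S_-}$ supported in $[-\delta,\delta]$. The prime sum then vanishes, and one checks numerically that $\Re\int_{\R} S_-(t)\,\frac{\Gamma'}{\Gamma}\!\big(\tfrac14+\tfrac{it}{2}+\mu\big)\,dt>\widehat{S_-}(0)\log\pi$ for every $\mu$ with $\Re\mu\ge 0$; together with $Q\ge 1$ this makes the right side of the explicit formula strictly positive. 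Since $S_-(\gamma)\le 0$ whenever $\gamma\notin(\alpha,\beta)$, positivity of $\sum_\gamma S_-(\gamma)$ forces a zero in $(\alpha,\beta)$. No zero-counting tail estimate ever enters, and the constant $\beta-\alpha=5/\delta=10\pi/\log 2\approx 45.3236$ is dictated directly by this construction.

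Your nonnegative-kernel-plus-tail-estimate route could in principle prove \emph{some} bound of this type, but as written it has genuine gaps. First, your tail bound $\sum_{|\gamma-c|\ge\Delta}K(\gamma-c)\le d\,\Psi(\Delta,c)+\frac{\log Q}{\pi}\Theta(\Delta,c)$ rests on a unit-interval zero count with error ``$O(d)$''; in fact the error in $N(T{+}1)-N(T)$ for a general $L$-function is of order the logarithm of the analytic conductor at height $T$ (so it depends on $Q$ and on the $\mu_j$, not just on $d$), and this must be absorbed before your comparison of the $\log Q$ and $d$ pieces can close. Second, nothing in your argument ties the outcome to the specific value $45.3236$: that number is an artifact of the Selberg minorant with $(\beta-\alpha)\delta=5$, not of any optimization over nonnegative band-limited kernels. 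The whole advantage of using a minorant is precisely that it converts the problem into a single sign check on the archimedean integral, with no competing tail term to subtract; your scheme trades that cleanliness for an extra estimate you have not carried out and which would, at best, produce a different (almost certainly larger) constant.
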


In the case all $\mu_j$ are real, Miller~\cite{M} proved the above theorem
with ``45.3236'' replaced by ``28''.   In Section~\ref{sec:example} we give a numerical example
to illustrate why things behave differently when the $\mu_j$ are complex.
That example has $\gamma_1-\gamma_{-1}\approx 28.992$.
A slightly improved version of Theorem~\ref{thm:selberg-class-bound}
is given by Bober~\cite{B}, and he also gives the optimal result
for the cases $d=3$ and~$4$.

The term ``lowest zero'' of an $L$-function is ill-defined, because
one must first choose a normalization of the $L$-function.  
The normalization is
clear in the case of Miller~\cite{M} because the parameters in the
$\Gamma$-factors can be chosen to be real.  But if $L(s)$ is an
$L$-function then so is $L(s+i y)$ for any real $y$.  A reasonable
normalization is to require $\sum \Im(\mu_j) = 0$, but other
normalizations are possible.  Thus, it is
natural to consider the maximum possible gap between
zeros instead, which is how we phrased our result above.

This discussion suggests two questions:

\begin{question}\label{q:1} Does there exist an $L$-function with a larger gap between its
zeros than any other $L$-function?
\end{question}

Theorem~\ref{thm:selberg-class-bound} shows that there is a least upper bound,
$\Upsilon$,
on the gap between consecutive zeros; the question is whether that bound
is attained.  We do not have a conjecture for $\Upsilon$, but Bober~\cite{B}
suggests that $\Upsilon < 36$ (i.e., $\gamma_1 < 18$).

\begin{question}\label{q:2} If $0<u<\Upsilon$, does there exist an $L$-function whose largest
zero gap is arbitrarily close to~$u$?
\end{question}

Considerations of the function field analogue and a conjecture of Yoshida~\cite{Y} suggest that the answers to Questions~\ref{q:1} and~\ref{q:2} may be
'no' and 'yes,' respectively.

Most of this work on this paper was completed during the workshop
\emph{Higher rank $L$-functions: theory and computation},
held at the Centro de Ciencias de Benasque Pedro Pascual in July 2009.
The motivation was a suggestion by David Farmer that one could make an ordered
list of all $L$-functions according to their lowest critical zero.
It was disappointing to find that such an ordering does not place
the Riemann zeta function first, and in fact the $L$-function of the
Ramanujan $\tau$-function would come before all the Dirichlet $L$-functions.
Depending on the answers to Questions~\ref{q:1} and~\ref{q:2},
it is possible that this ``list''
would not have a first
element, and any two $L$-functions could actually have infinitely many other
$L$-functions between them.

\section{A certain degree-4 $L$-function}\label{sec:example}

In~\cite{FKL} the authors perform computational experiments to discover
$L$-functions with functional equation~\eqref{eqn:fe} with $d=3$ or~$4$
and the $\mu_j$ purely imaginary.  The results are approximate values
for the $\mu_j$ and the coefficients $a_n$, which are claimed to be accurate
to several decimal places.  While it is not currently possible to
prove that those numerical examples are indeed approximations to actual $L$-functions, the
functions pass several tests which lend credence to their claim.

One example which is relevant to the present paper has $d=4$ with
$\mu_1=-\mu_2=4.7209 i$ and $\mu_3=-\mu_4=12.4687 i$.  Appropriately
interpreted, this is the ``first'' $L$-function with $d=4$ and the
$\mu_j$ purely imaginary.  A plot of the $Z$-function along the critical
line is given in Figure~\ref{fig:degree4}.  Note that on the
critical line the $Z$-function has the same absolute value as the $L$-function;
in particular, it has the same critical zeros.

 Figure~\ref{fig:degree4} shows that this $L$-function has its first
zero at $\gamma_1=14.496$.  The $L$-function is self-dual, so
$Z(t)$ is an even function of $t$ and
$\gamma_{-1}=-14.496$, giving a gap between zeros of 28.992.

\begin{figure}[htp]
\scalebox{1.0}[1.0]{\includegraphics{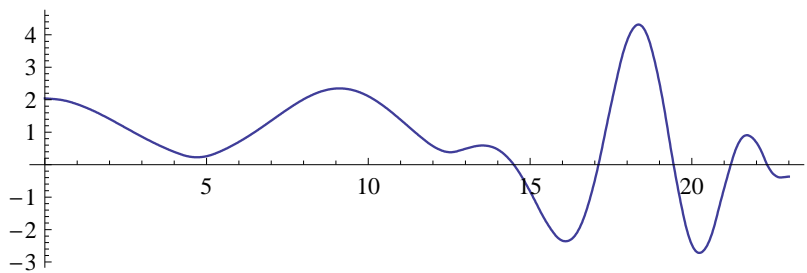}}
\caption{\sf
The $Z$-function of an $L$-function satisfying functional equation~\eqref{eqn:fe}
with $d=4$ and $\mu_1=-\mu_2=4.7209 i$ and $\mu_3=-\mu_4=12.4687 i$.
The first zeros are at $\pm \gamma_1=14.496$.
Data taken from~\cite{FKL}.
} \label{fig:degree4}
\end{figure}

The plot in Figure~\ref{fig:degree4} shows local minima near 4.7
and 12.5.  Those are due to the trivial zeros, which have
imaginary parts $- \Im(\mu_j)$.  Those trivial zeros suppress
the appearance of nearby zeros on the critical line, a phenomenon
first observed by Strombergsson~\cite{St}.
Thus, such $L$-functions can have a surprisingly large gap between
their critical zeros.

For the reader who may wish to check our calculations, such as with the
explicit formula, we provide the spectral parameters and initial
Dirichlet coefficients to higher precision:
\begin{align}
\mu_1=\mathstrut &4.72095103638565339773\cr
\mu_2=\mathstrut &12.4687522615131728082\cr
&\mathstrut \cr
a_2=\mathstrut&   \phantom{\mathstrut -\mathstrut }1.34260324197021624329 \cr
a_3=\mathstrut&  -0.18745190876087089719 \cr
a_4=\mathstrut&   \phantom{\mathstrut-\mathstrut}0.4644565335271682550 \cr
a_5=\mathstrut&  -0.001627934631772515 \cr
a_7=\mathstrut&   \phantom{\mathstrut-\mathstrut}0.22822958260580737 \cr
a_9=\mathstrut&  -0.4634288260750947 \cr
a_{11}=\mathstrut&  \phantom{\mathstrut-\mathstrut}0.695834471444353 \cr
a_{13}=\mathstrut& -0.8824356594477 
\end{align}
Note also that the zeros with imaginary part $0<\gamma<30$ are at
the heights $\{$14.4960615091, 17.1144514545, 19.4393573576,
21.193378013, 22.396088469, 23.108950059,
24.34252975, 25.59506020, 27.12281351,
28.2791393, 29.5857431$\}$.

\section{An upper bound on gaps between zeros}\label{sec:explicit}

\subsection{The explicit formula} We use Weil's explicit formula with a particular test function to
establish Theorem~\ref{thm:selberg-class-bound}.

The form of the explicit formula that we will use is the following.

\begin{lemma}\label{lem:weil} Suppose that $L(s)$ has a Dirichlet series
expansion
\eqref{eqn:ds} which continues to an entire function such that
\begin{equation}
    \Lambda(s) =
		Q^s \prod_{j=1}^d \Gamma_\R\left(s + \mu_j\right) L(s)
	= \varepsilon \overline{\Lambda(1-\overline{s})}
\end{equation}
is entire and satisfies the mild growth condition
$L(\sigma + it) \ll |t^A|$, uniformly in $t$ for bounded~$\sigma$.
Let $f(s)$ be holomorphic in a horizontal strip $-(1/2 + \delta) < \Im(s) < 1/2 + \delta$ with $f(s) \ll \min(1, |s|^{-(1+\epsilon)})$
in this region, and suppose that $f(x)$ is real valued for real $x$.
Suppose also that the Fourier transform
of $f$ defined by
\[
    \hat f(x) = \int_{-\infty}^\infty f(u)e^{-2\pi i u x} dx
\]
is such that
\[
    \sum_{n=1}^{\infty} \frac{c(n)}{n^{1/2}} \hat{f} \left( \frac{\log{n}}{2 \pi} \right) + \frac{\overline{c(n)}}{n^{1/2}} \hat f\left( -\frac{\log n}{2 \pi}\right)
\]
converges absolutely, where $c(n)$ is defined by
\begin{equation}
\frac{L'}{L}(s) = \sum_{n=1}^{\infty} \frac{c(n)}{n^s} .
\end{equation}
Then
\begin{align} \label{weil}
\sum_{\gamma} f(\gamma) =\mathstrut &  \frac{\widehat{f}(0)}{\pi} \log{Q} + \frac{1}{2 \pi}
\sum_{j=1}^d \ell(\mu_i, f)\cr
&+ \frac{1}{2 \pi} \sum_{n=1}^{\infty} \frac{c(n)}{n^{1/2}} \hat{f} \left( \frac{\log{n}}{2 \pi} \right) + \frac{\overline{c(n)}}{n^{1/2}} \hat f\left( -\frac{\log n}{2 \pi}\right)
\end{align}
where
\begin{equation}
\ell(\mu, f) = \Re\left\{\int_\R \frac{\Gamma'}{\Gamma} \left( \frac{1}{2} \left( \frac{1}{2} + i t \right) + \mu \right) f(t) dt\right\} - \hat f(0)\log \pi
\end{equation}
and the sum $\sum_\gamma$ runs over all non-trivial zeroes of $L(s)$.
\end{lemma}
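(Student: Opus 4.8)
The plan is to obtain \eqref{weil} by integrating the logarithmic derivative $\frac{\Lambda'}{\Lambda}$ against a transplanted copy of $f$ and reading off residues at the zeros. Since the zeros are written $\rho=\frac12+i\gamma$, I set $F(s)=f\!\left(-i\left(s-\frac12\right)\right)$; then $F\!\left(\frac12+it\right)=f(t)$, and the hypotheses on $f$ make $F$ holomorphic in the strip $-\delta<\Re(s)<1+\delta$ and decaying like $|\Im s|^{-(1+\epsilon)}$ on vertical lines. Because $\Lambda$ is entire with zeros precisely at the $\rho$ — the trivial zeros of $L$ being cancelled by the poles of the $\Gamma_\R$-factors and $Q^s$ contributing neither zero nor pole — $\frac{\Lambda'}{\Lambda}$ has simple poles at the $\rho$ with residue equal to the multiplicity, and $F(\rho)=f(\gamma)$. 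Fixing $0<\delta'<\delta$ and integrating $\frac{1}{2\pi i}\frac{\Lambda'}{\Lambda}(s)F(s)$ counterclockwise around the rectangle with vertical sides $\Re(s)=1+\delta'$ and $\Re(s)=-\delta'$ and horizontal sides at $\Im(s)=\pm T$, the residue theorem together with $T\to\infty$ gives
\[
\sum_{\gamma} f(\gamma)=\frac{1}{2\pi i}\int_{(1+\delta')}\frac{\Lambda'}{\Lambda}(s)F(s)\,ds-\frac{1}{2\pi i}\int_{(-\delta')}\frac{\Lambda'}{\Lambda}(s)F(s)\,ds=:I_1-I_2,
\]
where $(\sigma_0)$ denotes the line $\Re(s)=\sigma_0$ oriented upward. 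The lines are chosen so that $\Re=1+\delta'>1$ lies in the region of absolute convergence while both sit inside the strip of holomorphy of $F$.

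For $I_1$ I use $\frac{\Lambda'}{\Lambda}(s)=\log Q+\sum_{j}\frac{\Gamma_\R'}{\Gamma_\R}(s+\mu_j)+\frac{L'}{L}(s)$. On $\Re(s)=1+\delta'$ I expand $\frac{L'}{L}(s)=\sum_n c(n)n^{-s}$, interchange summation and integration (licensed by the assumed absolute convergence), and shift each resulting entire integrand $n^{-s}F(s)$ left to $\Re(s)=\frac12$, where $F\!\left(\frac12+it\right)=f(t)$ gives the Fourier identity
\[
\frac{1}{2\pi i}\int_{(1/2)}n^{-s}F(s)\,ds=\frac{1}{2\pi}\,n^{-1/2}\,\hat f\!\left(\frac{\log n}{2\pi}\right).
\]
The $\log Q$-piece and the $\Gamma_\R$-pieces shift to $\Re(s)=\frac12$ without meeting poles (those of $\frac{\Gamma_\R'}{\Gamma_\R}(s+\mu_j)$ lie in $\Re(s)\le0$, since $\Re(\mu_j)\ge0$), producing $\frac{\log Q}{2\pi}\hat f(0)$ together with $\frac{1}{2\pi}\sum_j\int_\R\frac{\Gamma_\R'}{\Gamma_\R}\!\left(\frac12+it+\mu_j\right)f(t)\,dt$.

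For $I_2$ I apply the functional equation in the form $\frac{\Lambda'}{\Lambda}(s)=-\overline{\frac{\Lambda'}{\Lambda}(1-\bar s)}$. On $\Re(s)=-\delta'$ the reflected point $1-\bar s$ has real part $1+\delta'>1$, so I may insert the convergent expansions there; taking the conjugate turns $c(n)n^{-(1-\bar s)}$ into $\overline{c(n)}\,n^{-(1-s)}$, sends $\log Q$ to itself, and conjugates the $\Gamma_\R$-term. Shifting to $\Re(s)=\frac12$ and using $\overline{\hat f(x)}=\hat f(-x)$ for real $x$ shows that $-I_2$ contributes exactly the complex conjugates of the $I_1$-pieces: the $\overline{c(n)}\,\hat f(-\frac{\log n}{2\pi})$ sum, a second $\frac{\log Q}{2\pi}\hat f(0)$, and the conjugate $\Gamma_\R$-integral. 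Adding $I_1-I_2$, the two $\log Q$ terms combine to $\frac{\hat f(0)}{\pi}\log Q$, and each conjugate pair of $\Gamma_\R$-integrals becomes $2\Re\{\cdots\}$; writing $\frac{\Gamma_\R'}{\Gamma_\R}(w)=-\frac12\log\pi+\frac12\frac{\Gamma'}{\Gamma}(w/2)$, the constant yields the $-\hat f(0)\log\pi$ term and the remainder assembles into $\frac{1}{2\pi}\sum_j\ell(\mu_j,f)$, which is \eqref{weil}.

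The genuinely technical point — and the step I expect to be the main obstacle — is justifying that the horizontal segments of the rectangle vanish as $T\to\infty$ and that the contour shifts above are legitimate. I would take the heights $T=T_k\to\infty$ to avoid the ordinates of zeros, which is possible because the number of zeros with $|\gamma-T_k|\le1$ is $O(\log T_k)$, and then bound $\frac{\Lambda'}{\Lambda}(\sigma+iT_k)$ on the segment by combining Stirling's estimate $\frac{\Gamma'}{\Gamma}(\sigma+iT_k)\ll\log(|T_k|+2)$ with the classical partial-fraction bound $\frac{L'}{L}(s)=\sum_{|\gamma-T_k|\le1}\frac{1}{s-\rho}+O(\log T_k)$ coming from the Hadamard factorization, which gives $\frac{\Lambda'}{\Lambda}(\sigma+iT_k)\ll(\log T_k)^2$ on the chosen heights. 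Against the decay $F\ll T_k^{-(1+\epsilon)}$ over a segment of bounded length, the horizontal contributions are $O\!\left((\log T_k)^2\,T_k^{-(1+\epsilon)}\right)\to0$; the growth hypothesis $L(\sigma+it)\ll|t|^A$ is exactly what supports these estimates. The remaining manipulations are bookkeeping with the Fourier transform and the functional equation.
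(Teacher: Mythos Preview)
Your argument is the standard contour-integration derivation of Weil's explicit formula and is correct as sketched; the residue computation, the use of the functional equation to reflect the left edge, the Fourier identification on $\Re(s)=\tfrac12$, and the Hadamard/Stirling bounds for killing the horizontal segments are all handled properly. The paper itself gives no proof at all: it simply cites Iwaniec--Kowalski \cite[p.~109]{IK} (noting a different Fourier normalization), and what you have written is essentially the argument found there.
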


\begin{proof}
This can be found in Iwaniec and Kowalski \cite[Page 109]{IK}, but note that they
use a different
normalization for the Fourier transform.
\end{proof}

Note that if we assume the Ramanujan-Petersson conjecture then $c(n) \ll n^\epsilon$,
but any mild growth estimate on the $c(n)$ is sufficient for our purposes.

The general strategy we will use is as follows: to show that $L(1/2 + it)$ has a zero
with $\alpha \le t \le \beta$,
we want to take $f$ to be a good approximation of $\chi_{(\alpha, \beta)}$, the step function with value $1$ on $(\alpha, \beta)$ and $0$ elsewhere, and such that
the support of $\hat{f}$ is contained in the interval $\left(- \frac{\log{2}}{2 \pi}, \frac{\log{2}}{2 \pi} \right)$.
Then, the last sum on the RHS of the explicit formula disappears, and for the $L$-functions that we are considering, (\ref{weil}) should look like
\begin{equation}\label{eq:misc1}
\sum_{\alpha < \gamma < \beta} f(\gamma) \approx \frac{\log Q}{\pi} \hat{f}(0) + \frac{1}{2 \pi} \sum_{j=1}^d \ell(\mu_j, f).
\end{equation}
Since $f$ approximates $\chi_{(\alpha, \beta)}$, we expect that
\[
    \ell(\mu_j, f) \approx \Re \left\{\int_{\alpha}^{\beta} \left(\frac{\Gamma'}{\Gamma} \left( \frac{1}{4} + \frac{it}{2} + \mu_j \right) \right) dt\right\} - (\beta - \alpha)\log \pi.
\]
If $\beta-\alpha$ is large enough then this will be positive for any $\mu_j$. We will then find that
the right side of \eqref{eq:misc1} is positive, which shows the existence of the zero
that we are looking for.

While we cannot actually use the characteristic function of the interval $(\alpha,\beta)$ in the explicit
formula, we do not quite need to. As long as $f(x)$ is positive for $\alpha < x < \beta$ and negative
elsewhere, the same argument will work. The function which we use here is
the Selberg minorant $S_-(z)$ for the interval $(\alpha,\beta)$ and with Fourier transform supported in
$( -(\log 2)/2\pi, (\log 2)/2\pi )$. We describe this function below. Note that with this approach
there are fundamental limits to how small we can make $\beta - \alpha$. According
to the uncertainty principle, we should need to make the support of $\hat f$ large
if we want to get a good function with $\beta - \alpha$ small.

\newcommand{\sgn}{\mathrm{sgn}}

\subsection{Selberg's amazing functions}
As we have already described, we would like to use in the explicit formula a function
$f(x)$ which is positive only in a prescribed interval and which has a compactly supported
Fourier transform. Additionally, we have some reason to believe that a good candidate
for our purposes should be close to $1$ inside this interval and close to $0$ outside
of it. Selberg \cite[pages 213--225]{Se} gives a construction of such functions which are
suitable for our purposes. 

These functions are easiest to describe by first defining the Beurling function
\[
    B(z) = 1 + 2\left(\frac{\sin \pi z}{\pi}\right)^2\left(\frac{1}{z} - \sum_{n=1}^\infty \frac{1}{(n + z)^2}\right).
\]
This function is a good approximation for the function
\[
    \sgn(x) = \left\{ \begin{array}{cl}
            1 & \textrm{if } x > 0 \\
            0 & \textrm{if } x = 0 \\
            -1 & \textrm{if } x < 0
            \end{array}\right.
\]
and it is a majorant for $\sgn(x)$; that is, $\sgn(x) \le B(x)$ for all real $x$. Beurling (unpublished)
showed that this is the best possible such approximation in the sense that if $F(z)$
is any entire function satisfying $\sgn(x) \le F(x)$ for all real $x$ and $F(z) \ll_\epsilon \exp( (2\pi + \epsilon) |z|)$, then
\[
    \int_{-\infty}^\infty \left(F(x) - \sgn(x)\right) \  dx \ge 1,
\]
with equality achieved if and only if $F(x) = B(x)$. (A proof can be found in \cite{V}.)

To approximate the characteristic function of an interval, we can use a simple linear combination
of Beurling functions.
\begin{definition}
The Selberg minorant $S_-(z)$ for the interval $[\alpha, \beta]$ and parameter $\delta > 0$
is defined by
\[\label{eqn:selminus}
    S_-(z) = -\frac{1}{2} \Big(B(\delta(\alpha - z)) + B(\delta(z - \beta))\Big)
\]
\end{definition}
Selberg \cite[pages 213--225]{Se} proved that whenever $\delta(\beta - \alpha)$ is an integer, $S_-(z)$ is a best possible minorant
for the characteristic function of the interval $[\alpha,\beta]$, in the same
sense that $B(z)$ is the best possible majorant for the $\sgn$ function, although $S_-(z)$ is not the unique best possible
minorant. We do not make us of this extremal property anywhere, but it is the motivation
behind our choice of using $S_-(z)$ in the explicit formula, and it may give
hope that our results are not too far from optimal.

We summarize some important properties of $S_-(z)$ that we do need in the following lemma.
\begin{lemma}\label{selberg-minorant-lemma} Let $S_-(z)$ be the Selberg minorant for the interval $[\alpha, \beta]$ with
parameter $\delta$. Then the following hold.
\begin{enumerate}
\item $S_-(x) \le \chi_{(\alpha, \beta)}(x)$ for all real $x$.
\item $\int_{-\infty}^\infty S_-(x) dx = \beta - \alpha - \frac{1}{\delta}$.
\item $\hat S_-(x) = 0$ for $x > \delta$ or $x < -\delta$.
\item For any $\epsilon > 0$, $S_-(z) \ll_{\delta, \alpha, \beta, \epsilon} \min\left(1, \frac{1}{|z|^2}\right)$ for $\Im(z) \le \epsilon$.
\end{enumerate}
\end{lemma}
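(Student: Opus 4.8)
The plan is to deduce all four assertions from standard properties of the Beurling function $B$ together with a single algebraic identity. Recall from \cite{Se,V} that $B$ majorizes $\sgn$, that $\int_\R\big(B(u)-\sgn(u)\big)\,du=1$, that $B$ is bounded on $\R$ and entire of exponential type $2\pi$, and that $B(z)-\sgn(\Re z)\ll_\epsilon|z|^{-2}$ uniformly in any horizontal strip $|\Im z|\le\epsilon$ (in such a strip $|\sin\pi z|^2\ll_\epsilon1$, and the tail $\tfrac1z-\sum_{n\ge1}(n+z)^{-2}$ is $\ll|z|^{-2}$ by Euler--Maclaurin, the poles at the negative integers being cancelled by the double zeros of $(\sin\pi z)^2$). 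The workhorse is the pointwise identity, valid for real $x$ away from $\{\alpha,\beta\}$,
\[
 S_-(x)-\chi_{(\alpha,\beta)}(x)=-\tfrac12\big(B(\delta(\alpha-x))-\sgn(\delta(\alpha-x))\big)-\tfrac12\big(B(\delta(x-\beta))-\sgn(\delta(x-\beta))\big),
\]
which one gets by substituting $\chi_{(\alpha,\beta)}(x)=\tfrac12\big(\sgn(x-\alpha)+\sgn(\beta-x)\big)$ into the definition of $S_-$ and using $\sgn(x-\alpha)=-\sgn(\delta(\alpha-x))$ and $\sgn(\beta-x)=-\sgn(\delta(x-\beta))$, since $\delta>0$.

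Given this, (1) is immediate: both brackets on the right are $\ge0$ because $B\ge\sgn$, and at the two endpoints one checks directly, e.g. $S_-(\alpha)=-\tfrac12\big(B(0)+B(-\delta(\beta-\alpha))\big)\le-\tfrac12(1-1)=0$ using $B(0)=1$ and $B(y)\ge\sgn(y)=-1$. For (2), integrate the identity: it is absolutely integrable by the $|z|^{-2}$ decay, and the substitutions $u=\delta(\alpha-x)$ and $v=\delta(x-\beta)$ turn each of the two integrals into $\tfrac1\delta\int_\R\big(B-\sgn\big)=\tfrac1\delta$, so $\int_\R\big(S_--\chi_{(\alpha,\beta)}\big)=-\tfrac1\delta$; adding $\int_\R\chi_{(\alpha,\beta)}=\beta-\alpha$ gives the stated value. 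For (4) (which we read in a horizontal strip $|\Im z|\le\epsilon$, the region relevant to Lemma~\ref{lem:weil}), the uniform strip bounds on $B$ give $S_-(z)\ll_\epsilon1$ and, through the identity together with the boundedness and compact support of $\chi_{(\alpha,\beta)}$, $S_-(z)\ll_{\delta,\alpha,\beta,\epsilon}|z|^{-2}$; combining yields $\min(1,|z|^{-2})$.

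For (3), each of $z\mapsto B(\delta(\alpha-z))$ and $z\mapsto B(\delta(z-\beta))$ is entire of exponential type $2\pi\delta$ and bounded on $\R$, hence a tempered distribution, so the Paley--Wiener--Schwartz theorem gives — with the normalization $\hat f(x)=\int_\R f(u)e^{-2\pi iux}\,du$ used here — that its Fourier transform is supported in $[-\delta,\delta]$; the same then holds for the linear combination $S_-$.

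I expect the only non-routine step to be (3): because $B\notin L^1(\R)$ one cannot form $\hat B$ by direct integration and must pass through the distributional Paley--Wiener theorem (or else compute the Fourier--Laplace transform of $B$ by hand, splitting off the non-integrable $\sgn$-part). Everything else is bookkeeping around the displayed identity, resting on the two classical facts about $B$ — Beurling's integral normalization and the $O(|z|^{-2})$ decay of $B-\sgn$ in strips — supplied by \cite{Se} and \cite{V}.
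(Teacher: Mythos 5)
Your argument is correct, but it is worth noting that the paper does not actually prove this lemma at all: its ``proof'' is the single sentence that all four facts can be found in Selberg's collected papers \cite[pages 213--225]{Se}. So you have supplied a derivation where the authors supply a citation. Your route is the standard one and it works: the pointwise identity expressing $S_-(x)-\chi_{(\alpha,\beta)}(x)$ as $-\tfrac12$ times a sum of two copies of $B-\mathrm{sgn}$ is exactly right (using $\chi_{(\alpha,\beta)}(x)=\tfrac12(\mathrm{sgn}(x-\alpha)+\mathrm{sgn}(\beta-x))$ off the endpoints), and it cleanly yields (1) from $B\ge\mathrm{sgn}$, (2) from Beurling's normalization $\int_\R(B-\mathrm{sgn})=1$ after the change of variables, and (4) from the $O(|z|^{-2})$ decay of $B-\mathrm{sgn}(\Re z)$ in horizontal strips; your endpoint check for (1) via $B(0)=1$ is also needed and correct. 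Two remarks. First, you rightly read the condition in (4) as the strip $|\Im(z)|\le\epsilon$ rather than the half-plane $\Im(z)\le\epsilon$ literally printed in the lemma (on which the bound would fail by exponential growth); that is the reading required by Lemma~\ref{lem:weil}. Second, for (3) your detour through the Paley--Wiener--Schwartz theorem for tempered distributions applied to each Beurling summand separately is more than is needed: by (1), (2) and (4) the function $S_-$ itself lies in $L^1\cap L^2(\R)$ and extends to an entire function of exponential type $2\pi\delta$, so the classical $L^2$ Paley--Wiener theorem already places $\hat S_-$ in $[-\delta,\delta]$; but your distributional argument is also valid since boundedness on $\R$ plus exponential type gives the required growth bound in the plane by Phragm\'en--Lindel\"of. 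In short: the paper buys brevity by outsourcing everything to \cite{Se}; you buy a self-contained verification at the cost of invoking the classical facts about $B$ (majorization, normalization, type, strip decay) from \cite{Se} and \cite{V}, which is where the real content lives in either case.
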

\begin{proof}
All of these facts can be found in Selberg's work \cite[pages 213--225]{Se}.
\end{proof}

\begin{remark}
For the function $f$ that we choose in the explicit formula, we will also need $f(x) > 0$ in a prescribed
range.
If $\delta(\beta - \alpha)$ is too small, then this might not be the case for the function $S_-(x)$.
With the specific parameters we choose, this will hold for our application, however.
\end{remark}

\subsection{Proof of Theorem \ref{thm:selberg-class-bound}}

\begin{proof}[Sketch of proof of Theorem \ref{thm:selberg-class-bound}]
Lemma \ref{selberg-minorant-lemma} tells us that in the formula we may choose $f(s) = S_-(s)$.
We do so, with $\alpha = -2.5/\delta$ and $\beta = 2.5/\delta$, where $\delta = \frac{\log 2}{2\pi}$.
The explicit formula then reads
\begin{equation}\label{eq-selberg-function-explicit-formula}
    \sum_\gamma S_-(\gamma) = \frac{\log Q}{\pi} \hat S_-(0) 
        + \frac{1}{2\pi} \sum_{j=1}^d \Re\left\{ \int_{-\infty}^\infty \frac{\Gamma'}{\Gamma}\left(\frac{1}{4} + \frac{it}{2} + \mu_j\right)S_-(t) dt\right\}
        - \frac{d}{2\pi}\hat S_-(0)\log \pi
\end{equation}
Since 
$\hat S_-(0) = 4/\delta$ is positive and $Q\ge 1$,
we may ignore the first term of this sum in
establishing a lower bound. We then can check that
\[
    \Re\left\{\int_{-\infty}^\infty \frac{\Gamma'}{\Gamma}\left(\frac{1}{4} + \frac{it}{2} + \mu\right)S_-(t) dt\right\} > \hat S_-(0) \log \pi
\]
for all choices of $\mu$. The right hand side of \eqref{eq-selberg-function-explicit-formula}
is thus positive. As $S_-(\gamma)$ is only positive when $\alpha < \gamma < \beta$, we conclude
that $L(1/2 + i\gamma) = 0$ for some $\gamma$ in this range.

More details of this computation will appear in \cite{B}.
\end{proof}

\begin{remark}
Note that $\beta - \alpha \approx 45.3236$. It should be possible to use the Selberg functions to make
this difference a very little bit smaller without changing the proof, but not much. We have
chosen $\alpha$ and $\beta$ as above because the Selberg function has a much nicer representation when $(\beta - \alpha)\delta$
is an integer, which simplifies computation.
\end{remark}

\section{Positivity and non-existence of $L$-functions}\label{sec:millermethod}

Although Miller's paper~\cite{M} concerns $L$-functions of real archimedean
type, the methods also apply to the $L$-functions considered
here.  When that approach is used on degree 3 or degree 4 $L$-functions
with functional equation~\eqref{eqn:fe} and $\mu_j$ pure imaginary,
the result is not quite conclusive.  Miller's implementation involves two calculations.
One calculation shows that if all the $\mu_j$ are sufficient small
(i.e., lying in a certain bounded region which can be made explicit) then
such an $L$-function cannot exist.  The second calculation shows that
if the $\mu_j$ lie outside another (possibly larger) region, then the $L$-function
must have a zero with imaginary part less than~$14$.

For $d=3$ or $4$,
those two calculations do not resolve whether or not there is an $L$-function
with a first zero higher than the Riemann zeta function, because there
remains a very small region which could possibly correspond to an $L$-function.
And if such an $L$-function exists, one would then need to calculate its first
zero to check if it was higher than 14.13.
For the case of $d=3$, calculations of Bian~\cite{Bi, FKL} show that
there are no $L$-functions in the missing region.  Details are given by
Bober~\cite{B}.

But for $d=4$, calculations in~\cite{FKL} suggest that there is an
$L$-function in the missing region.  This is somewhat surprising because
that region is very small, as shown in Figure~\ref{fig:excluded}.
Furthermore, as was shown in Figure~\ref{fig:degree4}, that $L$-function
has a larger gap between its zeros than does the Riemann zeta function.
Note:  the example from \cite{FKL} was found prior to our implementation
of Miller's inequalities for $d=4$.  Specifically, that $L$-function was found from a general
search for degree-4 $L$-functions, not merely from an attempt to find
examples of $L$-functions with a high lowest zero.  Perhaps that makes
it even more surprising that such an example exists.

\begin{figure}[htp]
\scalebox{0.7}[0.7]{\includegraphics{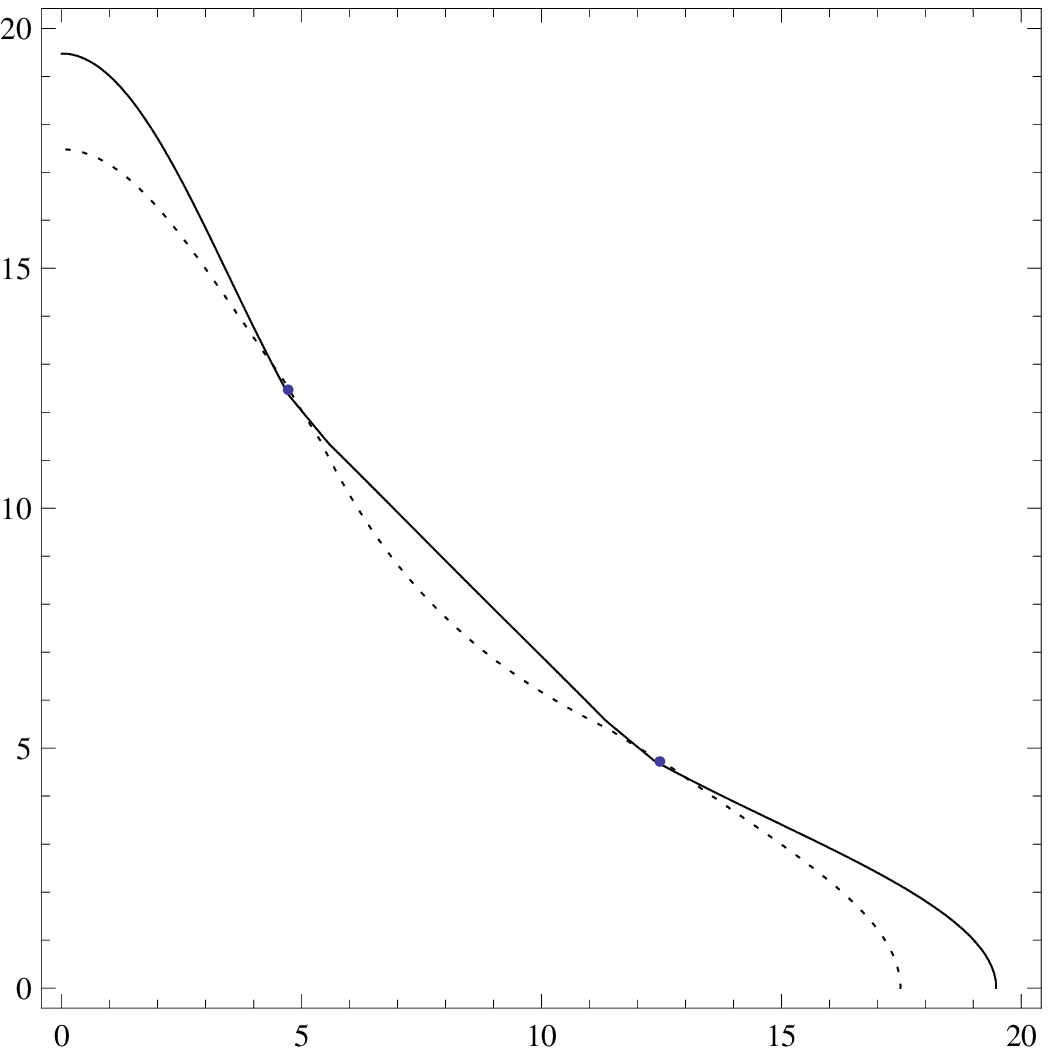}}
\hskip 0.1in
\scalebox{0.7}[0.7]{\includegraphics{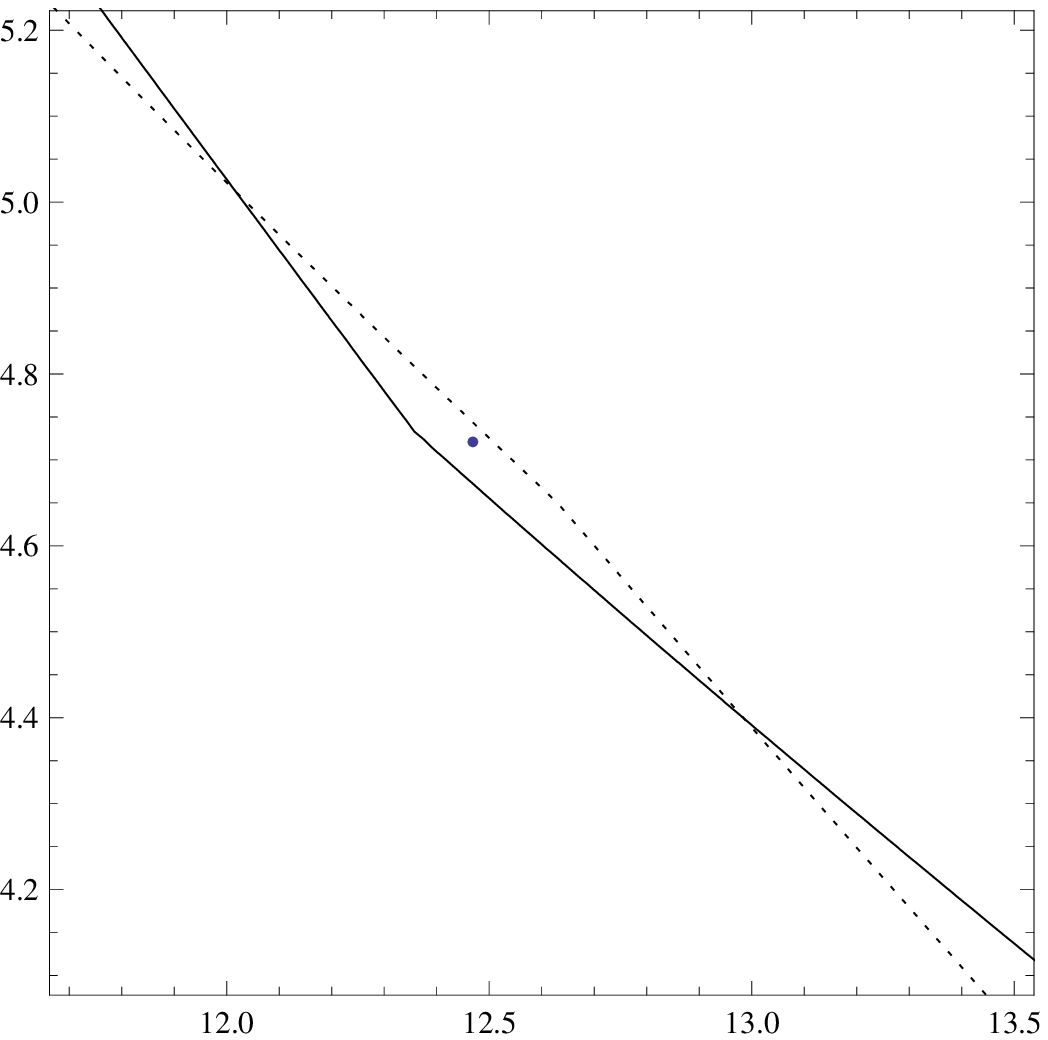}}
\caption{\sf
The region outside the solid curve describes pairs $(\nu_1,\nu_2)$ for which it
is possible that an $L$-function with functional equation \eqref{eqn:fe} exists,
where $(\mu_1,\mu_2,\mu_3,\mu_4)=(\nu_1,-\nu_1,\nu_2,-\nu_2)$.
The region outside the dotted curve describes pairs $(\nu_1,\nu_2)$ for which
such an $L$-function, if it exists, must have a zero lower than the first zero of
the Riemann zeta function.  The black dot corresponds to the $L$-function shown
in Figure~\ref{fig:degree4}. 
} \label{fig:excluded}
\end{figure}


\begin{thebibliography}{9999}

\bibitem[Bi]{Bi} Ce Bian, Computing GL(3) automorphic forms,
 Bull. Lond. Math. Soc. 42 (2010), no. 5, 827--842.

\bibitem[B]{B} Jonathan Bober, Bounds for large gaps between zeros of $L$-functions, preprint.

\bibitem[FKL]{FKL} David W. Farmer, Sally Koutsoliotas, and Stefan Lemurell,
Maass forms on $GL(3)$ and $GL(4)$, to appear in IMRN.

\bibitem[IK]{IK} Henryk Iwaniec and Emmanuel Kowalski, 
Analytic number theory.
American Mathematical Society Colloquium Publications, 53. American Mathematical Society, Providence, RI, 2004. 

\bibitem[M]{M} Stephen D. Miller, The Highest-Lowest Zero and other Applications of
Positivity, Duke Math. J. Volume 112, Number 1 (2002), 83-116.


\bibitem[O]{O} Andrew Odlyzko, Bounds for discriminants and related estimates
for class numbers, regulators and zeroes of zeta functions: a survey of
recent results, S\'eminaire de Th\'eorie des Nombres, Bordeaux 2 (1990),
119–141.

\bibitem[Se]{Se} Atle Selberg, Collected Papers. Volume II.
Springer-Verlag, Berlin, 1991

\bibitem[St]{St} Andreas Strombergsson,
On the zeros of $L$-functions associated to Maass waveforms.  Internat. Math. Res. Notices  1999,  no. 15, 839--851

\bibitem[T]{T} E.C. Titchmarsh, \emph{The theory of the Riemann zeta-function},
2nd Edition, Oxford University Press, 1986.

\bibitem[V]{V} Jeffrey D. Vaaler, Some extremal functions in Fourier analysis. Bull. Amer. Math. Soc. (N. S.) Volume 12, Number 2 (1985), 183--216.

\bibitem[Y]{Y} Hiroyuki Yoshida,
On calculations of zeros of various $L$-functions,
 J. Math. Kyoto
 University, vol 35(1995), 663-696.

\end{thebibliography}
\end{document}